
\documentclass[preprint,12pt]{elsarticle}




\usepackage{amssymb}
\usepackage{amsthm}
\usepackage{amssymb}

\usepackage{amsmath,amssymb}
\usepackage{natbib}


\newtheorem{theorem}{Theorem}

\newtheorem{lemma}{Lemma}

\newtheorem{remark}{Remark}


\graphicspath{ {figures/} }



\journal{Journal of Computational and Applied Mathematics
}

\begin{document}

\begin{frontmatter}



\title{Finite difference and numerical differentiation: General formulae from deferred corrections\tnoteref{label1}}
\tnotetext[label1]{The authors would like to acknowledge the financial support of the Discovery Grant Program of the Natural Sciences and Engineering Research Council of Canada (NSERC) and a scholarship to the first author from the NSERC CREATE program ``G\'enie par la Simulation''.}
\author{Saint-Cyr E.R.\ Koyaguerebo-Im\'e}
\ead{skoya005@uottawa.ca}
\author{Yves Bourgault}
\ead{ybourg@uottawa.ca}

\cortext[cor1]{Corresponding author: ybourg@uottawa.ca}
\address{Department of Mathematics and Statistics,
         University of Ottawa, STEM Complex, \\
         150 Louis-Pasteur Pvt, Ottawa, ON, Canada, K1N 6N5, Tel.: +613-562-5800x2103 
}

\begin{abstract}
This paper provides a new approach to derive various arbitrary high order finite difference formulae for the numerical differentiation of analytic functions. In this approach, various first and second order formulae for the numerical approximation of analytic functions are given with error terms explicitly  expanded as Taylor series of the analytic function. These lower order approximations are successively improved by one or two (two order improvement for centered formulae) to give finite difference formulae of arbitrary high order. The new approach allows to recover the standard backward, forward, and centered finite difference formulae which are given in terms of formal power series of finite difference operators. Examples of new formulae suited for deferred correction methods are given. 
\end{abstract}



\begin{keyword}finite difference formulae, numerical differentiation




\end{keyword}

\end{frontmatter}


\section{Introduction}
\label{intro}
Finite differences are commonly used for discrete approximations of derivatives. Large classess of schemes for the numerical approximation of ordinary differential equations (ODEs) and partial differential equations (PDEs) are derived from finite differences. Formulae for numerical differentiations are generally obtained from a linear combination of Taylor series, which leads to solving a system of linear equations \cite{khan1999closed,khan2000new,khan2003taylor,quarteroni2010} or calculating derivatives of  interpolating polynomials (for instance see \cite{quarteroni2010}). References \cite{hildebrand1974introduction,chung2010computational,dahlquist2008numerical} give a number of finite difference formulae, for high order approximation of derivatives, in term of formal power series of finite difference operators. 
 
The purpose of this paper is to provide some basic results on finite difference approximations, which results are required for the numerical analysis of higher order time-stepping schemes for ODEs and PDEs. We introduce a new approach to derive arbitrary high order finite difference formulae which avoids the need for solving a system of linear equations. We provide various formulae for the discrete approximation of any order $p$ derivative of an analytic function $u$ at a point $t_0$ using $p$ arbitrary points  $t_1<t_2<\cdots<t_p$ evenly spread around $t_0$. These discrete approximations are of order 1 or 2 (order 2 for centred formulae), with errors explicitly expanded in terms of Taylor series with the derivatives $u^{(p+i)}(t_0)$, $i=1,2,\cdots $. Substituting successively $u^{(p+1)}(t_0), u^{(p+2)}(t_0), \cdots $ by their finite difference approximations in the error term for the discrete approximation of $u^{(p)}(t_0)$, we improve successively by 1 or 2 the order of the discrete approximation of $u^{(p)}(t_0)$. An efficient choice of the discrete points minimizes the number of points needed for a given order of accuracy of the discrete approximation of $u^{(p)}(t_0)$. Our approach can be used to recover the existing finite difference formulae, but it also provides various new  formulae. We give three new finite difference formulae which are useful for the construction of new high order time-stepping schemes and their efficient starting procedures via the deferred correction (DC) method. In fact, the use of standard backward and central finite differences in building high order time-stepping schemes via the DC method leads to the computation of starting values for these schemes outside the solution interval while the standard forward finite difference formula leads to unstable schemes (see, e.g., \cite{daniel1967interated,MR2058857,kress2002deferred,koyaguerebo2019arbitrary,koyaguerebo2020unconditionally}). 

The paper is organized as follows: in section 2 we recall the main finite difference operators and prove some of their main properties; section 3 presents general first and second order approximations of derivatives with error terms explicitly expressed as Taylor series; section 4 gives many results for arbitrary high order finite difference approximations, and section 5 deals with a numerical test. 
%
%
%
\section{Properties of finite difference operators}
\label{sec:1}
In this section we recall the standard finite difference operators and provided some of their useful properties.

For a given spacing $k>0$ and a real $t_0\in \mathbb{R}$, we denote $t_n=nk$ and $t_{n+1/2}=(n+1/2)k$, for each integer $n$. The centered, forward and backward difference operators $D$, $D_+$ and $D_-$, respectively, related to $k$, and applied to a function $u$ from $\mathbb{R}$ into a Banach space $X$, are defined as follows:
$$
Du(t_{n+1/2})=\frac{u(t_{n+1})-u(t_n)}{k},$$
$$D_+u(t_{n})=\frac{u(t_{n+1})-u(t_n)}{k},$$ 
and 
$$D_-u(t_{n})=\frac{u(t_{n})-u(t_{n-1})}{k}.$$ The average operator is denoted by $E$:
$$E u(t_{n+1/2})=\widehat{u}(t_{n+1})=\frac{u(t_{n+1})+u(t_n)}{2}.$$
The composites of $D_+$ and $D_-$ are defined recursively. They commute, that is 
$$(D_+D_-)u(t_n)=(D_-D_+)u(t_n)=D_-D_+u(t_n),$$ and satisfy the identities
\begin{equation}
\label{bb1}
(D_+D_-)^mu(t_n)=k^{-2m} \sum_{j=0}^{2m}(-1)^j {{2m}\choose {j}}u(t_{n+m-j} ),
\end{equation}
\begin{equation}
\label{bb2}
D_-(D_+D_-)^mu(t_n)=k^{-2m-1}\sum_{j=0}^{2m+1}(-1)^j{{2m+1}\choose {j}}u(t_{n+m-j}),
\end{equation}and
\begin{equation}
\label{bb3}
D_+^{m_1}D_-^{m_2}u(t_n)=k^{-m_1-m_2}\sum_{j=0}^{m_1+m_2}(-1)^j{{m_1+m_2}\choose {j}}u(t_{n+m_1-j}),
\end{equation}
for each nonnegative integer $m$, $m_1$, and $m_2$ such that these sums exist. Formulae (\ref{bb1})-(\ref{bb3}) can be proven by a straightforward induction argument. 

We introduce the double index $\alpha ^m=(\alpha ^m_1,\alpha ^m_2) \in  \left\lbrace 0,1,...,m\right\rbrace \times \left\lbrace 0,1,...,m\right\rbrace $ such that
\begin{equation}
\label{aa3}
D^{\alpha^m}u(t_n)=D_+^{\alpha_1^m}D_-^{\alpha_2^m}u(t_n).
\end{equation}  

\begin{remark} 
\label{rmk:1}
If $\vert \alpha^m\vert=\alpha_1^m+\alpha_2^m$ is even, then we have
\begin{equation}
\label{a4}
D^{\alpha^m}u(t_n)=(D_+D_-)^{|\alpha^m|/2}u(t_{m'}),
\end{equation}for some integer $m'$. For example, $$D_+D_-^3u(t_n)=(D_+D_-)^2u(t_{n-1}),$$ and  $$D_-^4u(t_n)=(D_+D_-)^2u(t_{n-2}).$$
\end{remark}

\begin{theorem}[Finite difference approximation of a product]Suppose that $X$ is a Banach algebra. Then, for any functions $f,g:\mathbb{R} \rightarrow X$, we have
\begin{equation}
\label{aa9}
D_-(fg)(t_n)=D_-f(t_n)g(t_n)+f(t_n)D_-g(t_n)-kD_-f(t_n)D_-g(t_n),
\end{equation}
\begin{equation}
\label{aa10}
D_+(fg)(t_n)=D_+f(t_n)g(t_n)+f(t_n)D_+g(t_n)+kD_+f(t_n)D_+g(t_n),
\end{equation}and
\begin{equation}
\label{aa11}
\begin{aligned}		
D_+D_-(fg)(t_n)=&D_+D_-f(t_n)g(t_n)+f(t_n)D_+D_-g(t_n)+D_+f(t_n)D_-g(t_n)\\&+D_-f(t_n)D_+g(t_n)+k^2D_+D_-f(t_n)D_+D_-g(t_n).
\end{aligned}
\end{equation}
 More generally,  for each integer $m=1,2,..., $ such that $(D_+D_-)^m(fg)(t_n)$ exists, we have the formula
	
\begin{equation}
\label{aa12}
(D_+D_-)^m(fg)(t_n)=\sum_{j=0}^m\binom{m}{j}k^{2j}\sum_{\alpha^m+\beta^m=(m+j,m+j)}D^{\alpha^m}f(t_n)D^{\beta ^m}g(t_n).
\end{equation}
\end{theorem}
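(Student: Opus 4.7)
The plan is to prove the three elementary identities (\ref{aa9}), (\ref{aa10}), (\ref{aa11}) by direct expansion from the definitions, and then establish the general formula (\ref{aa12}) by induction on $m$, using (\ref{aa11}) as the main tool at each step.

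For (\ref{aa9}), I would rewrite $f(t_{n-1}) = f(t_n) - kD_-f(t_n)$ and $g(t_{n-1}) = g(t_n) - kD_-g(t_n)$, which are immediate rearrangements of the definition of $D_-$, and substitute into $D_-(fg)(t_n) = [f(t_n)g(t_n) - f(t_{n-1})g(t_{n-1})]/k$. Expanding the product in the Banach algebra $X$ (noncommutativity is not an obstacle since the identity preserves the order of the factors $f,g$) and dividing by $k$ yields (\ref{aa9}); identity (\ref{aa10}) follows the same way from $f(t_{n+1}) = f(t_n) + kD_+f(t_n)$. For (\ref{aa11}), I would apply $D_+$ to both sides of (\ref{aa9}) and expand each resulting $D_+$ of a product using (\ref{aa10}). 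Collecting like terms and invoking the identity $D_+ - D_- = kD_+D_-$ (a direct consequence of the definitions) would turn the single-$k$ terms into $k^2$ mixed derivatives and produce the four "leading" terms together with the single $k^2$ term.

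For (\ref{aa12}) I would induct on $m$. The base case $m=1$ is precisely (\ref{aa11}). For the inductive step, write $(D_+D_-)^{m+1}(fg) = D_+D_-\bigl[(D_+D_-)^m(fg)\bigr]$, apply (\ref{aa11}) to each summand $D^{\alpha^m}f \cdot D^{\beta^m}g$ coming from the induction hypothesis, and collect. Each such application produces five contributions: four "$k^0$" terms that shift the pair $(\alpha^m,\beta^m)$ to a new pair with componentwise sum $(m+1+j,m+1+j)$, and one "$k^2$" term that shifts the sum to $(m+2+j,m+2+j)$. Thus the $(m+1)$-formula at level $j'$ receives contributions from the $j=j'$ summand of the induction hypothesis (through the four no-$k$ terms) and from the $j=j'-1$ summand (through the $k^2$ term); Pascal's identity $\binom{m}{j'} + \binom{m}{j'-1} = \binom{m+1}{j'}$ should then supply the outer binomial coefficient appearing in the target formula.

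The delicate point will be the internal bookkeeping: for a fixed target index pair $(\alpha^{m+1},\beta^{m+1})$ with $\alpha^{m+1}+\beta^{m+1}=(m+1+j',m+1+j')$, one must enumerate which source pairs $(\alpha^m,\beta^m)$ and which of the five transitions in (\ref{aa11}) land on it, and verify that the accumulated multiplicity agrees with the inner sum of the $(m+1)$-version. This componentwise combinatorial check — essentially showing that the 5-term rule iterates coherently along the two independent index axes via repeated Pascal-type identities — is where the bulk of the work lies.
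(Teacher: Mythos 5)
Your strategy coincides with the paper's own proof: the identities (\ref{aa9})--(\ref{aa11}) are verified by direct expansion (your derivation of (\ref{aa11}) from (\ref{aa9}), (\ref{aa10}) and $D_+-D_-=kD_+D_-$ is correct), and (\ref{aa12}) is obtained by induction on $m$, applying (\ref{aa11}) to each summand of the induction hypothesis and combining the resulting $k^0$ and $k^2$ contributions with Pascal's identity. The paper's proof is exactly this, including the identification of the four index-shifting transitions and the single $k^{2}$ transition.

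However, the step you defer as ``internal bookkeeping'' is not a routine verification --- it is precisely the identity the paper itself asserts without proof (its equation (\ref{aa14})), and if you carry out the enumeration you describe, it fails under the literal reading of the inner sums. Already at the first inductive step ($m=1$, $j=0$): the four $k^0$ products in (\ref{aa11}) each spawn four $k^0$ terms when $D_+D_-$ is applied again, giving sixteen (source, transition) pairs, but there are only nine index pairs $(\alpha^2,\beta^2)$ with $\alpha^2+\beta^2=(2,2)$; the pair $\alpha^2=\beta^2=(1,1)$ is reached four times, each of $\bigl((2,1),(0,1)\bigr)$, $\bigl((1,2),(1,0)\bigr)$, $\bigl((1,0),(1,2)\bigr)$, $\bigl((0,1),(2,1)\bigr)$ twice, and the remaining four pairs once. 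All sixteen terms enter with sign $+$ and correspond to genuinely distinct bilinear expressions, so no cancellation reduces them to a plain nine-term sum. A concrete check: for $f(t)=g(t)=t^{2}$, $k=1$, $t_n=0$, one computes $(D_+D_-)^{2}(fg)(0)=24$, while the right-hand side of (\ref{aa12}) with $m=2$, summing each index pair once, gives $12$; the value $24$ is recovered only if each pair is weighted by its multiplicity $\binom{2}{\alpha_1}\binom{2}{\alpha_2}$ ($16+4+4=24$). So the ``accumulated multiplicity'' you propose to verify does \emph{not} agree with the inner sum of the $(m+1)$-version as written: the induction goes through only if the sums in (\ref{aa12}) are reinterpreted as sums over ordered distributions of the $D_+$ and $D_-$ factors between $f$ and $g$ (i.e.\ with these binomial multiplicities attached), or if the corresponding weights are inserted explicitly. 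Your proof cannot be completed as stated without resolving this point, and the same remark applies to the paper's own argument.
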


\begin{proof}
The formulae (\ref{aa9})-(\ref{aa11}) can be obtained by a straightforward calculation, so we just need to establish (\ref{aa12}). We proceed by induction on the positive integer $m$. From the index notation introduced in (\ref{aa3}), we can write 
$$
\begin{aligned}	D_+D_-f(t_n)g(t_n)+f(t_n)D_+D_-g(t_n)+D_+f(t_n)D_-g(t_n)+D_-f(t_n)D_+g(t_n)\\=\sum_{\alpha^1+\beta^1=(1,1)}D^{\alpha^1}f(t_n)D^{\beta^1}g(t_n),
\end{aligned}$$
and
$$D_+D_-f(t_n)D_+D_-g(t_n)=D^{\alpha^1}f(t_n)D^{\beta^1}g(t_n),~~\mbox{ with } \alpha^1+\beta^1=(2,2).$$ These two identities combined with (\ref{aa11}) yield 

$$D_+D_-(fg)(t_n)=\sum_{j=0}^1\binom{1}{j}k^{2j}\sum_{\alpha^1+\beta^1=(1+j,1+j)}D^{\alpha^1}f(t_n)D^{\beta ^1}g(t_n),$$that is formula (\ref{aa12}) holds for $m=1$. Now suppose that (\ref{aa12}) holds until  some rank $m\geq 1$. We are going to show that it remains true for $m+1$. By the induction hypothesis, we can write
\begin{equation}
\label{aa13}
(D_+D_-)^{m+1}(fg)(t_n)=\sum_{j=0}^m\binom{m}{j}k^{2j}\sum_{\alpha^m+\beta^m=(m+j,m+j)}D_+D_-[D^{\alpha^m}f(t_n)D^{\beta ^m}g(t_n)].
\end{equation}
Expanding $D_+D_-[D^{\alpha^m}f(t_n)D^{\beta ^m}g(t_n)]$ as in the formula (\ref{aa11}), we deduce that 
\begin{align}
\label{aa14}
	\sum_{\alpha^m+\beta^m=(m+j,m+j)}D_+D_-[D^{\alpha^m}f(t_n)D^{\beta ^m}g(t_n)]=S(j)+k^2S(j+1),
	\end{align}where
	$$S(j)=\sum_{\alpha^{m+1}+\beta^{m+1}=(m+1+j,m+1+j)}D^{\alpha^{m+1}}f(t_n)D^{\beta ^{m+1}}g(t_n).$$
We have
$$
\begin{aligned}
\sum_{j=0}^m\binom{m}{j}k^{2j}[S(j)+&k^2S(j+1)]=S(0)\\&+\sum_{j=1}^mk^{2j}\left[ {{m}\choose{j-1}}+{{m}\choose{j}}\right] S(j)+k^{2m+2}S(m+1),
\end{aligned}$$	
and deduce from (\ref{aa13}), (\ref{aa14}) and the identity $\binom{m}{j}+\binom{m}{j-1}=\binom{m+1}{j}$ that the formula (\ref{aa12}) holds for $m+1$. Finally, we conclude by induction that this formula is true for each suitable positive integer $m$. 
\end{proof}

\begin{theorem}[Finite difference approximation of a composite]
\label{prop:3}
Consider two functions $f$ and $u$ with values into Banach spaces such that the composite $f\circ u$ is defined on $\mathbb{R}$ and the differential $df$ is integrable. Then
\begin{equation}
\label{a10}
D_-f(u(t_n))=\int_0^1df\left(  u(t_{n-1})+\tau k D_-u(t_n)\right) ( D_-u(t_n) )d\tau
\end{equation}
and
\begin{equation}
\label{a11}
D_+f(u(t_n))=\int_0^1df \left( u(t)+\Delta t D_+u(t)\tau \right) ( D_+u(t) )d\tau
\end{equation}
\end{theorem}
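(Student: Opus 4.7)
The plan is to derive both identities from the fundamental theorem of calculus applied along the line segment connecting $u(t_{n-1})$ and $u(t_n)$ (respectively $u(t_n)$ and $u(t_{n+1})$) in the target Banach space, together with the chain rule for the differential $df$.

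First I would treat the backward formula \eqref{a10}. The idea is to introduce the auxiliary function $\varphi:[0,1]\to X$ defined by
\begin{equation*}
\varphi(\tau) = f\bigl(u(t_{n-1}) + \tau(u(t_n)-u(t_{n-1}))\bigr),
\end{equation*}
so that $\varphi(0) = f(u(t_{n-1}))$ and $\varphi(1) = f(u(t_n))$. By the chain rule, $\varphi$ is differentiable with
\begin{equation*}
\varphi'(\tau) = df\bigl(u(t_{n-1}) + \tau(u(t_n)-u(t_{n-1}))\bigr)\bigl(u(t_n)-u(t_{n-1})\bigr).
\end{equation*}
Since $df$ is integrable, the fundamental theorem of calculus yields $\varphi(1)-\varphi(0) = \int_0^1 \varphi'(\tau)\,d\tau$. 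Dividing both sides by $k$ and using $u(t_n)-u(t_{n-1}) = k D_-u(t_n)$ in both the integrand's argument of $df$ and as the factor it is applied to, I obtain precisely \eqref{a10}.

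Then for the forward formula \eqref{a11} the argument is completely analogous: replace $\varphi$ by $\psi(\tau) = f(u(t_n) + \tau(u(t_{n+1})-u(t_n)))$, apply the chain rule and the fundamental theorem of calculus on $[0,1]$, and substitute $u(t_{n+1})-u(t_n) = k D_+u(t_n)$ to arrive at the stated integral representation (reading $\Delta t$ as $k$ and $u(t)$ as $u(t_n)$ in the statement).

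The main conceptual point — not an obstacle but the step worth stating carefully — is that the identity relies on the fundamental theorem of calculus for Banach-space-valued functions and on the chain rule for the Fr\'echet differential; both hold once one assumes $df$ is integrable along the relevant segment, which is built into the hypotheses. No induction or combinatorial bookkeeping is needed, so the proof is essentially one line of chain rule followed by one line of integration.
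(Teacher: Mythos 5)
Your proof is correct and is exactly the argument the paper has in mind: the paper's own proof is the one-line remark ``As in standard mean value theorem,'' and your chain-rule-plus-fundamental-theorem-of-calculus derivation along the segment from $u(t_{n-1})$ to $u(t_n)$ (resp.\ $u(t_n)$ to $u(t_{n+1})$) is precisely that standard argument, spelled out. No discrepancy to report.
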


\begin{proof}
As in standard mean value theorem. 
\end{proof}

\section{First and second order discrete approximation of derivatives }
\label{sec:2}
In this section  we provide various formulae for the finite difference approximation of arbitrarry high order derivatives of analytic functions. The approximations are of order one or two, and the error terms are explicitly expanded i terms of Taylor series. We need the following lemma which proof is an easy induction.

\begin{lemma}
\label{lem:a8}
For positive integers $m$ and $p$ and for any real $r$, we have
\begin{equation}
\label{a12}
\sum_{j=0}^{m}(-1)^j\binom{m}{j}(m+r-j)^{p}=\left\lbrace \begin{tabular}{cccc}0,& \mbox{ if $1\leq p< m$,}&\\
m!,& \mbox{ if $p=m$.}&
\end{tabular}
\right. 
\end{equation}
In particular, for any nonnegative integer $p$, we have
\begin{equation}
\label{a13}
\sum_{j=0}^{2m}(-1)^j\binom{2m}{j}(m-j)^{2p+1}=0, 
\end{equation}

\begin{equation}
\label{a14}
\sum_{j=0}^{2m+1}(-1)^j\binom{2m+1}{j}(m-j+1/2)^{2p}=0, 
\end{equation}and
\begin{equation}
\label{a15}
\sum_{j=0}^{2m}(-1)^j\binom{2m}{j}\left[  (m-j+1/2)^{2p+1}+(m-j-1/2)^{2p+1}\right] =0.
\end{equation}
\end{lemma}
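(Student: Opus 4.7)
The plan is to prove the general identity (\ref{a12}) by interpreting its left-hand side as an iterated forward difference of a polynomial, then deduce (\ref{a13})--(\ref{a15}) from simple symmetry arguments.

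For (\ref{a12}), let $\Delta$ denote the forward difference operator $\Delta g(t) = g(t+1) - g(t)$. A straightforward induction on $m$ yields the representation
$$
\Delta^m g(t) = \sum_{j=0}^{m} (-1)^{m-j} \binom{m}{j} g(t+j).
$$
Applying this with $g(t) = (m+r-t)^p$ and evaluating at $t = 0$ gives
$$
\sum_{j=0}^{m}(-1)^j\binom{m}{j}(m+r-j)^p \;=\; (-1)^m \,\Delta^m g(0),
$$
since $(-1)^j = (-1)^m (-1)^{m-j}$. The function $g$ is a polynomial in $t$ of degree $p$ with leading coefficient $(-1)^p$. By the classical fact (itself an easy induction on $m$) that $\Delta$ lowers the degree of any polynomial by one, and that $\Delta^m t^m = m!$, one gets $\Delta^m g \equiv 0$ when $p < m$ and $\Delta^m g \equiv (-1)^m m!$ when $p = m$. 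Multiplying by $(-1)^m$ recovers the two cases of (\ref{a12}).

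The three refinements (\ref{a13})--(\ref{a15}) then follow from pairing each index $j$ with its mirror image. In (\ref{a13}), the substitution $j \mapsto 2m-j$ preserves both $\binom{2m}{j}$ and $(-1)^j$, while the odd power $(m-j)^{2p+1}$ changes sign; hence the sum equals its own negative, and the self-paired term at $j = m$ is zero. In (\ref{a14}), the substitution $j \mapsto 2m+1-j$ preserves $\binom{2m+1}{j}$ and preserves $(m-j+1/2)^{2p}$ (even exponent) but flips $(-1)^j$; no index is fixed since $2m+1$ is odd, so again the sum equals minus itself. In (\ref{a15}), the substitution $j \mapsto 2m-j$ preserves the coefficients and sends the bracket $(m-j+1/2)^{2p+1} + (m-j-1/2)^{2p+1}$ to its own negative (the two odd-power terms swap roles and change sign), while the self-paired term $j = m$ vanishes on the nose since $(1/2)^{2p+1} + (-1/2)^{2p+1} = 0$.

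The only substantive step is the classical fact that $\Delta$ strictly lowers polynomial degree with $\Delta^m t^m = m!$, which is a one-line induction because $\Delta(t^n) = n\, t^{n-1} + (\text{lower order})$. Everything else is bookkeeping, and I expect no genuine obstacle beyond keeping signs straight in the three symmetry pairings.
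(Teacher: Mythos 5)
Your proof is correct. The paper itself gives no argument for this lemma beyond the remark that it is ``an easy induction,'' so there is little to compare against; your route via the iterated forward difference $\Delta^m g(t)=\sum_{j=0}^m(-1)^{m-j}\binom{m}{j}g(t+j)$ applied to the degree-$p$ polynomial $g(t)=(m+r-t)^p$ is exactly the standard way to make that induction precise, and the sign bookkeeping $(-1)^j=(-1)^m(-1)^{m-j}$ together with $\Delta^m t^m=m!$ is handled correctly. One point worth emphasizing in your favour: the identities (\ref{a13})--(\ref{a15}) are stated for \emph{all} nonnegative $p$, so the exponents $2p+1$ and $2p$ can exceed the number of summation nodes, and in that regime they are \emph{not} instances of (\ref{a12}) despite the paper's ``In particular.'' Your mirror substitutions $j\mapsto 2m-j$ and $j\mapsto 2m+1-j$ are therefore not mere bookkeeping but the actual content of those three claims, and you have verified the sign of each factor (binomial symmetric, $(-1)^j$ preserved or flipped according to the parity of the upper index, odd powers antisymmetric, even powers symmetric, and the fixed point $j=m$ vanishing) correctly in each case.
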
 

\begin{theorem}
\label{theo:a9}
Suppose that the function $u:[0,T] \rightarrow X$ is analytic. Let $0=t_0<t_1<...<t_N=T$, $t_n=nk$, be a partition of the interval $[0,T]$. For each positive integer $m$, we have
\begin{equation}
\label{ab28}
\begin{aligned}
 D_+^mu(t_n) =u^{(m)}(t_n)+\sum_{i=m+1}^{\infty} \frac{k^{i-m}}{i!}u^{(i)}(t_{n})\sum_{j=0}^{m}(-1)^j\binom{m}{j}(m-j)^{i},
\end{aligned}
\end{equation}

\begin{equation}
\label{ab29}
\begin{aligned}
 D_-^mu(t_n) =u^{(m)}(t_n)+\sum_{i=m+1}^{\infty} \frac{k^{i-m}}{i!}u^{(i)}(t_{n})\sum_{j=0}^{m}(m-1)^j\binom{m}{j}(-j)^{i},
\end{aligned}
\end{equation}

\begin{equation}
\label{a25}
\begin{aligned}
 &D_-(D_+D_-)^mu(t_n) =u^{(2m+1)}(t_n)\\&+\sum_{i=2m+2}^{\infty} \frac{k^{i-2m-1}}{i!}u^{(i)}(t_{n})\sum_{j=0}^{2m+1}(-1)^j\binom{2m+1}{j}(m-j)^{i},
\end{aligned}
\end{equation}

\begin{equation}
\label{a26}
(D_+D_-)^m u(t_n)=u^{(2m)}(t_n)+\sum_{i=m+1}^{\infty} \frac{k^{2i-2m}}{(2i)!}u^{(2i)}(t_{n})\sum_{j=0}^{2m}(-1)^j\binom{2m}{j}(m-j)^{2i},
\end{equation}

\begin{equation}
\label{a27}
\begin{aligned}
& D(D_+D_-)^m u(t_{n+1/2})=u^{(2m+1)}(t_{n+1/2})\\&
+\sum_{i=m+1}^{\infty} \frac{k^{2i-2m}}{(2i+1)!}u^{(2i+1)}(t_{n+1/2})\sum_{j=0}^{2m+1}(-1)^j\binom{2m+1}{j}(m-j-1/2)^{2i+1},
\end{aligned}
\end{equation}and
\begin{equation}
\label{a28}
(D_+D_-)^mEu(t_{n+1/2})=u^{(2m)}(t_{n+1/2})+\sum_{i=m+1}^{\infty}a_{mi} \frac{k^{2i-2m}}{(2i)!}u^{(2i)}(t_{n+1/2}),
\end{equation}where
$$a_{mi}=\frac{1}{2}\sum_{j=0}^{2m}(-1)^j\binom{2m}{j}\left[ (m-j+1/2)^{2i}+(m-j-1/2)^{2i}\right].$$
\end{theorem}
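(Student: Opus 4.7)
The six identities admit a single three-step recipe. First, rewrite each finite difference operator on the left-hand side as a $k^{-N}$-scaled linear combination $\sum_{j=0}^N(-1)^j\binom{N}{j}u(\text{grid point})$ using the closed forms (\ref{bb1})--(\ref{bb3}). Second, Taylor-expand each point value of $u$ around the relevant evaluation point---$t_n$ for (\ref{ab28})--(\ref{a26}) and $t_{n+1/2}$ for (\ref{a27})--(\ref{a28}). Third, exchange the finite $j$-sum with the absolutely convergent Taylor $i$-sum (Fubini applies since $u$ is analytic), so that the coefficient of $u^{(i)}/i!$ becomes a combinatorial sum of exactly the shape treated in Lemma \ref{lem:a8}; the lemma then annihilates every term below the target order and pins the leading coefficient to $1$.

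Concretely, for (\ref{ab28}) I would take $m_2=0$ in (\ref{bb3}) to get $D_+^m u(t_n)=k^{-m}\sum_{j=0}^m(-1)^j\binom{m}{j}u(t_{n+m-j})$, Taylor expand each $u(t_{n+m-j})$ around $t_n$, swap sums, and apply (\ref{a12}) with $r=0$: the inner sum vanishes for $1\le i<m$, equals $m!$ at $i=m$ (yielding the leading $u^{(m)}(t_n)$), and supplies the stated tail for $i\ge m+1$. Formulae (\ref{ab29}), (\ref{a25}) and (\ref{a26}) are obtained the same way from the remaining cases of (\ref{bb1})--(\ref{bb3}), with the additional observation that for the symmetric case (\ref{a26}) the odd-$i$ contributions vanish by (\ref{a13}), so only even orders $i\ge 2m+2$ survive in the error.

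The identities (\ref{a27})--(\ref{a28}) centered at $t_{n+1/2}$ require the same recipe but with half-integer shifts. For (\ref{a27}) I would use $Du(t_{n+1/2})=D_-u(t_{n+1})$ together with (\ref{bb2}) applied at $t_{n+1}$, then Taylor-expand around $t_{n+1/2}$; this produces a coefficient of the form $\sum_{j=0}^{2m+1}(-1)^j\binom{2m+1}{j}(m-j+1/2)^i$ whose even-$i$ part is killed by (\ref{a14}) and whose odd-$i$ part below $2m+1$ is killed by the appropriate shift of (\ref{a12}). For (\ref{a28}), writing $Eu(t_{n+1/2})=\tfrac12[u(t_n)+u(t_{n+1})]$ and applying (\ref{bb1}) to each half, Taylor expansion around $t_{n+1/2}$ yields the bracket coefficient $\tfrac12[(m-j+1/2)^i+(m-j-1/2)^i]$ summed against $(-1)^j\binom{2m}{j}$; the odd-$i$ part then vanishes by (\ref{a15}), the even-$i$ part with $1\le i<2m$ vanishes by two applications of (\ref{a12}) (one per half-integer shift), and the surviving even coefficients are exactly the stated $a_{mi}$.

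The main obstacle is purely combinatorial bookkeeping: matching each residual $j$-sum to the correct instance of Lemma \ref{lem:a8} by identifying the right shift $r$ in $(N+r-j)^p$. Once the correct shift is identified---integer for the $t_n$-centered formulae, half-integer (respectively an $E$-averaged pair of half-integers) for (\ref{a27}) (respectively (\ref{a28}))---the cancellation of every sub-leading term is automatic, and the leading coefficient $1$ of $u^{(m)}$, $u^{(2m)}$ or $u^{(2m+1)}$ is recovered from the $p=N$ case of (\ref{a12}).
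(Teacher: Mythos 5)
Your proposal coincides with the paper's own argument: the paper likewise rewrites each operator via the stencil identities (\ref{bb1})--(\ref{bb3}), Taylor-expands the point values about the evaluation point, interchanges the finite $j$-sum with the Taylor $i$-sum, and invokes Lemma \ref{lem:a8} to annihilate the sub-leading coefficients and normalize the leading one, proving only (\ref{a27}) explicitly and leaving the remaining five as ``similar.'' Your half-integer shift $(m-j+1/2)$ in the coefficient for (\ref{a27}) is in fact the one consistent with a stencil symmetric about $t_{n+1/2}$ (one can check on $m=0$ that it reproduces the correct $k^2/24$ leading error, whereas the printed $(m-j-1/2)$ does not), so your derivation is sound and essentially identical to the paper's.
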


\begin{proof} We only prove formula (\ref{a27}). The other formulae can be proven similarly. By Taylor expansion series we have
$$u(t_{n+m-j})=u(t_{n+s})+\sum_{i=1}^{\infty}\frac{k^i}{i!}(m-s-j)^iu^{(i)}(t_{n+s}).$$
Choosing $s=1/2$ in this formula, we deduce from (\ref{bb2}) that
\begin{equation*}
\begin{aligned}
&D(D_+D_-)^mu(t_{n+1/2})=k^{-2m-1}\sum_{j=0}^{2m+1}(-1)^j\binom{2m+1}{j}u(t_{n+m-j})\\&
=k^{-2m-1}\sum_{i=1}^{\infty}\frac{k^i}{i!}u^{(i)}(t_{n+1/2})\sum_{j=0}^{2m+1}(-1)^j\binom{2m+1}{j}(m-j-1/2)^i,
\end{aligned}
\end{equation*}
and (\ref{a27}) follows from (\ref{a12}) and (\ref{a14}).
\end{proof}

\begin{theorem}
\label{cor:a10}
Let $u$ be $C^{m}([0,T],X)$, $m=1,2,...$, and $0=t_0<t_1<...<t_N=T$, $t_n=nk$, be a partition $[0,T]$. Let $m_1$ and $m_2$ be two positive integers such that $m_1+m_2\leq m$.  Then, for each integer $n$ such that $m_2\leq n\leq N-m_1$, $D_+^{m_1}D_-^{m_2}u(t_n)$ is bounded independently of $n$, and we have the estimate
\begin{equation*}
\left \| D_+^{m_1}D_-^{m_2}u(t_n)\right \| \leq  C\max_{t_{n-m2}\leq t\leq t_{n+m_1}}\left \|u^{(m_1+m_2)}(t) \right \|,
\end{equation*} where $C$ is a constant depending only on the integer $m$.
\end{theorem}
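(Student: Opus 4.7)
\textbf{Proof plan for Theorem \ref{cor:a10}.} The strategy is to combine the closed form expression (\ref{bb3}) for $D_+^{m_1}D_-^{m_2}u(t_n)$ with a finite Taylor expansion with integral remainder (which is available since $u\in C^{m}$ and $m_1+m_2\le m$), and then use the vanishing moment identity (\ref{a12}) of Lemma \ref{lem:a8} to cancel every polynomial contribution. What remains is only the integral remainders, and their sum is bounded uniformly in $n$.

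Set $M=m_1+m_2$. By (\ref{bb3}),
\begin{equation*}
D_+^{m_1}D_-^{m_2}u(t_n)=k^{-M}\sum_{j=0}^{M}(-1)^j\binom{M}{j}u(t_{n+m_1-j}).
\end{equation*}
For each $j\in\{0,1,\dots,M\}$ the point $t_{n+m_1-j}$ lies in $[t_{n-m_2},t_{n+m_1}]$, which by hypothesis is contained in $[0,T]$. Writing $h_j=(m_1-j)k$, Taylor's formula with integral remainder (valid because $u\in C^M\subset C^{m_1+m_2}$) gives
\begin{equation*}
u(t_{n+m_1-j})=\sum_{i=0}^{M-1}\frac{h_j^{\,i}}{i!}\,u^{(i)}(t_n)+\frac{1}{(M-1)!}\int_0^{h_j}(h_j-\sigma)^{M-1}u^{(M)}(t_n+\sigma)\,d\sigma ,
\end{equation*}
where the integral is interpreted in the oriented sense when $h_j<0$.

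Substituting this expansion into the sum, the polynomial part contributes
\begin{equation*}
k^{-M}\sum_{i=0}^{M-1}\frac{k^i\,u^{(i)}(t_n)}{i!}\sum_{j=0}^{M}(-1)^j\binom{M}{j}(m_1-j)^{i},
\end{equation*}
and formula (\ref{a12}) of Lemma \ref{lem:a8} (applied with $r=m_1$, so that $m_1-j=M+r-j-m_2$; equivalently, the classical identity $\sum_{j=0}^M(-1)^j\binom{M}{j}P(j)=0$ for every polynomial $P$ of degree $<M$) shows that every inner sum vanishes for $0\le i\le M-1$. Hence
\begin{equation*}
D_+^{m_1}D_-^{m_2}u(t_n)=\frac{k^{-M}}{(M-1)!}\sum_{j=0}^{M}(-1)^j\binom{M}{j}\int_0^{h_j}(h_j-\sigma)^{M-1}u^{(M)}(t_n+\sigma)\,d\sigma .
\end{equation*}
After the change of variable $\sigma=h_j\tau$, each integral has norm at most $|h_j|^{M}/M\le (Mk)^{M}/M$ times $\max_{[t_{n-m_2},t_{n+m_1}]}\|u^{(M)}\|$, and this is the main (but routine) estimate.

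Summing the $M+1$ terms, the factor $k^{-M}$ cancels the $k^{M}$ coming from $|h_j|^{M}$, and we obtain
\begin{equation*}
\|D_+^{m_1}D_-^{m_2}u(t_n)\|\le C_{m_1,m_2}\max_{t_{n-m_2}\le t\le t_{n+m_1}}\|u^{(m_1+m_2)}(t)\|,
\end{equation*}
with $C_{m_1,m_2}=\frac{1}{M!}\sum_{j=0}^{M}\binom{M}{j}|m_1-j|^{M}$, a constant depending only on $M\le m$ (so it may be bounded by a constant $C$ depending only on $m$). The sole place that requires care is handling the case $h_j<0$ in the remainder; this is a bookkeeping issue about the orientation of the integral, and it does not affect the size estimate $|R_j|\le (|h_j|^M/M!)\max\|u^{(M)}\|$. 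That is the only non-automatic step, and it is a standard feature of Taylor's theorem with integral remainder.
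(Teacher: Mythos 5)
Your proof is correct and follows essentially the same route as the paper: a Taylor expansion with integral remainder combined with the vanishing-moment identity of Lemma \ref{lem:a8} to kill the polynomial part, leaving a remainder bounded by $\max\|u^{(m_1+m_2)}\|$ times a combinatorial constant. The only differences are cosmetic — the paper first reduces to $(D_+D_-)^p$ or $D_-(D_+D_-)^p$ via Remark \ref{rmk:1} while you work directly from (\ref{bb3}), and your stated value of $r$ in the application of (\ref{a12}) should be $r=-m_2$ rather than $r=m_1$, though the identity you actually invoke (vanishing against polynomials of degree $<M$) is the right one.
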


\begin{proof} According to Remark \ref{rmk:1}, it is enough to just prove the theorem for  $(D_+D_-)^pf(t_n)$ or $D_-(D_+D_-)^pf(t_n)$, for suitable positive integer $p$ (the case $p=0$ is trivial). As in the previous proof, Taylor expansion of order $(2p-1)$ with integral remainder together with formulae (\ref{bb1}) and (\ref{a12}) yields

$$(D_+D_-)^pu(t_n)=\sum_{j=0}^{2p}\frac{(-1)^j}{(2p-1)!}\binom{2p}{j}(p-j)^{2p}\int_0^1(1-s)^{2p-1}u^{(2p)}(t_n+(p-j)ks)ds.$$ It follows that
$$
\begin{aligned}
\left \| (D_+D_-)^pu(t_n) \right \| &\leq \frac{1}{(2p)!}\sum_{j=0}^{2p}\binom{2p}{j}(p-j)^{2p}\max_{t_{n-p}\leq t\leq t_{n+p}}\left \|u^{(2p)}(t)\right \|.
\end{aligned}$$ Similar reasoning can be applied in the case of $D_-(D_+D_-)^pu(t_n)$.
 
\end{proof}
%
%
%
%
\section{Arbitrary high order finite difference approximations}
\label{sec:3}
\begin{theorem}
\label{theo:1} There exists a sequence $\displaystyle \left\lbrace c_i \right\rbrace_{i\geq 2}$ of real numbers such that for any function $u\in C^{2p+3}\left( [0,T],X\right)$, where $p$ is a positive integer,  and a partition $0=t_0<t_1<...<t_N=T$, $t_n=nk$, of $[0,T]$, we have
\begin{equation}
\label{b6} 
u'(t_{n+1/2})=
\frac{u(t_{n+1})-u(t_n)}{k}-\sum_{i=1}^pc_{2i+1}k^{2i}D(D_+D_-)^iu(t_{n+1/2}) +O(k^{2p+2}),
\end{equation}
and
\begin{equation}
\label{b7} 
u(t_{n+1/2}) =\frac{u(t_{n+1})+u(t_n)}{2}-\sum_{i=1}^pc_{2i}k^{2i}(D_+D_-)^iE u(t_{n+1/2})+O(k^{2p+2}),
\end{equation} for $p\leq n\leq N-1-p$. The error constants for the formulae (\ref{b6}) and (\ref{b7}) are, respectively, $c_{2p+3}$ and $c_{2p+2}$. Table \ref{tab:1} gives the first ten coefficients $c_i$.

\begin{table}[h!] 
\caption{Ten first coefficients of central difference approximations (\ref{b6}) and (\ref{b7})} \centering     
\label{tab:1}
\begin{tabular}{llllllllll}  
\hline\noalign{\smallskip} 
    $c_2$ &~$c_3$ &~~~ $c_4$ &~~~~$c_5$& ~$c_6$&~~$c_7$&~~~~$c_8$&~~~~~$c_9$&~~~$c_{10}$&~~~$c_{11}$\\[1.5ex]
    $\frac{1}{8}$        &$\frac{1}{24}$         &$-\frac{18}{4!2^5}$      &$-\frac{18}{5!2^5}$     &$\frac{450}{6!2^7}$      &$\frac{450}{7!2^7}$      &$-\frac{22050}{8!2^9}$   &$-\frac{22050}{9!2^9}$  &$\frac{1786050}{10!2^{11}}$     & $\frac{1786050}{11!2^{11}}$\\[1ex]
\noalign{\smallskip}\hline
\end{tabular} 
\end{table} 
\end{theorem}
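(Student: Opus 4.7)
The approach is deferred correction. For each $m \geq 0$, Theorem \ref{theo:a9} gives an expansion
$$D(D_+D_-)^m u(t_{n+1/2}) = u^{(2m+1)}(t_{n+1/2}) + \sum_{j \geq m+1} \gamma_{mj}\, k^{2j-2m}\, u^{(2j+1)}(t_{n+1/2})$$
with explicit coefficients $\gamma_{mj}$ (read off from (\ref{a27})). Truncating this series at $j = p$ via Taylor's theorem with integral remainder (legal because $u \in C^{2p+3}$) yields a finite-sum version with remainder $O(k^{2p+2-2m})$. Taking $m = 0$ and rearranging gives
$$u'(t_{n+1/2}) = Du(t_{n+1/2}) - \sum_{j=1}^{p} \gamma_{0j}\, k^{2j}\, u^{(2j+1)}(t_{n+1/2}) + O(k^{2p+2}).$$

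I then substitute, for each $i = 1, \ldots, p$, the $m = i$ expansion to eliminate $u^{(2i+1)}(t_{n+1/2})$ in favour of $D(D_+D_-)^i u(t_{n+1/2})$ plus terms involving strictly higher-order derivatives. Performing these substitutions in decreasing order of $i$ ensures that each newly introduced remainder inherits an extra factor $k^{2i}$ and so stays within the $O(k^{2p+2})$ budget. Collecting like terms yields (\ref{b6}), and matching the coefficient of $k^{2j} u^{(2j+1)}(t_{n+1/2})$ to zero for $j = 1, \ldots, p$ produces the lower-triangular recursion
$$c_3 = \gamma_{01}, \qquad c_{2j+1} = \gamma_{0j} - \sum_{i=1}^{j-1} c_{2i+1}\gamma_{ij} \quad (j \geq 2).$$
Since this recursion does not depend on $p$, a single universal sequence $\{c_{2i+1}\}$ works for every $p$, and the error constant $c_{2p+3}$ is read off from the first omitted term at index $j = p+1$.

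The argument for (\ref{b7}) is strictly parallel, with (\ref{a28}) replacing (\ref{a27}): set $\tilde{\gamma}_{mj} = a_{mj}/(2j)!$, replace $D(D_+D_-)^i$ by $(D_+D_-)^i E$, and run the same cascaded substitution to obtain the even-indexed $c_{2j}$ from an analogous triangular recursion. The entries of Table \ref{tab:1} then follow by explicit evaluation of $\gamma_{0j}$ and $a_{0j}$ using Lemma \ref{lem:a8}, followed by mechanical execution of the recursion. The main technical obstacle is the bookkeeping for the cascaded substitutions: one must execute them in the right order so that every remainder picks up the required power of $k$, and one must invoke Theorem \ref{cor:a10} to bound each finite-difference quantity $D(D_+D_-)^i u(t_{n+1/2})$ uniformly in $n$ by $\|u^{(2i+1)}\|_\infty$, so that the cumulative remainder is cleanly $O(k^{2p+2})$ on the stated index range $p \leq n \leq N-1-p$.
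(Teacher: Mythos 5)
Your overall strategy coincides with the paper's: Taylor-expand the plain centered difference about $t_{n+1/2}$, then successively trade each derivative $u^{(2i+1)}(t_{n+1/2})$ (resp.\ $u^{(2i)}(t_{n+1/2})$) for $D(D_+D_-)^iu(t_{n+1/2})$ (resp.\ $(D_+D_-)^iEu(t_{n+1/2})$) using the expansions (\ref{a27}) and (\ref{a28}) of Theorem \ref{theo:a9}, arriving at a lower-triangular recursion for the coefficients. Your recursion $c_{2j+1}=\gamma_{0j}-\sum_{i=1}^{j-1}c_{2i+1}\gamma_{ij}$ is exactly the paper's recursion for the $d_{q,i}$ after unrolling, and your remark that it does not involve $p$ is the right way to justify the existence of a single universal sequence $\{c_i\}$ and to read off the error constant from the first omitted index.

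The one genuine flaw is the claim that the substitutions should be carried out \emph{in decreasing order of} $i$ and that this keeps all new terms within the $O(k^{2p+2})$ budget. It does not. When you eliminate $u^{(2i+1)}$ for some $i<p$, whose coefficient is of size $k^{2i}$, the expansion of $D(D_+D_-)^iu$ reintroduces the derivatives $u^{(2i+3)},\dots,u^{(2p+1)}$ with new coefficients of size $k^{2i+2},\dots,k^{2p}$; these are \emph{not} $O(k^{2p+2})$, and they are precisely the derivatives you already eliminated at the earlier (larger-$i$) steps, so the elimination would have to be redone and the one-pass argument collapses. The sweep must go in \emph{increasing} order of $i$: at step $q$ one replaces $u^{(2q+1)}$, which only perturbs the coefficients of the not-yet-treated derivatives $u^{(2j+1)}$, $j>q$, while the truncation remainder picks up the factor $k^{2q}$ and stays at $O(k^{2p+2})$. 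This is the order the paper uses ($q=1,\dots,p-1$), and it is also the order implicitly encoded by your own triangular recursion, so the fix is simply to reverse the stated direction of the cascade; the rest of the argument, including the parallel treatment of (\ref{b7}) and the appeal to Theorem \ref{cor:a10} to bound the finite-difference terms uniformly on $p\leq n\leq N-1-p$, is sound.
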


\begin{proof} By Taylor expansion we can write
\begin{equation}
\label{a21}
u(t_{n+1})=u(t_n)+ku'(t_{n+1/2})+\sum_{i=1}^p \frac{d_{1,2i+1}}{(2i+1)!}k ^{2i+1} u^{(2i+1)}(t_{n+1/2}) +O(k^{2p+3})
\end{equation}
and  
\begin{equation}
\label{a22}
u(t_{n+1})=-u(t_n)+2u(t_{n+1/2})+\sum_{i=1}^p\frac{d_{1,2i}}{(2i)!}k ^{2i} u^{(2i)}(t_{n+1/2}) +O(k^{2p+2}),
\end{equation}
 with $d_{1,i}=2^{1-i}$, for $i=2,3,..., 2p+1$. Therefore, substituting successively the derivatives $u^{(3)}(u_{n+1/2})$, $u^{(5)}(t_{n+1/2})$, ... and $u^{(2)}(t_{n+1/2})$, $u^{(4)}(t_{n+1/2})$, ... by their expansion given by the formulae (\ref{a27}) and (\ref{a28}), respectively, into (\ref{a21}) and (\ref{a22}), we deduce the identities
\begin{align*}
&u(t_{n+1})=u(t_n)+ku'(t_{n+\frac{1}{2}})+\frac{d_{1,3}}{3!} k^3DD_+D_-u(t_{n+\frac{1}{2}})+...+\\&\frac{d_{q,2q+1}}{(2q+1)!} k^{2q+1}D(D_+D_-)^qu(t_{n+\frac{1}{2}})+\sum_{i=q+1}^p \frac{d_{q+1,2i+1}}{(2i+1)!} k ^{2i+1}u^{(2i+1)}(t_{n+\frac{1}{2}}) +O(k^{2p+3})
\end{align*}  
and  
\begin{align*}
&u(t_{n+1})=-u(t_n)+2u(t_{n+1/2})+\frac{d_{1,2}}{2!} k^2D_+D_-Eu(t_{n+1/2})+...\\&+\frac{d_{q,2q}}{(2q)!} k^{2q}(D_+D_-)^qEu(t_{n+1/2})+\sum_{i=q+1}^p\frac{d_{q+1,2i}}{(2i)!} k ^{2i} u^{(2i)}(t_{n+1/2}) +O(k^{2p+2})
\end{align*}
where, for $q=1,...,p-1$, and $i=q+1,q+2,...,p$, we have 
\begin{equation*}
d_{q+1,2i+1}=d_{q,2i+1}-\frac{d_{q,2q+1}}{(2q+1)!}\sum_{j=0}^{2q+1}(-1)^j{{2q+1}\choose{j}}(q-j-1/2)^{2i+1},
\end{equation*}and
\begin{equation*}
d_{q+1,2i}=d_{q,2i}-\frac{d_{q,2q}}{(2q)!\times 2}\sum_{j=0}^{2q}(-1)^j{{2q}\choose{j}}[(q-j-1/2)^{2i}+(q-j-3/2)^{2i}].
\end{equation*}
Finally,  the identities (\ref{b6} ) and (\ref{b7}) follow by setting $c_{2i}=d_{i,2i}/((2i)!\times 2)$ and $c_{2i+1}=d_{i,2i+1}/(2i+1)!$, for $i=1,2,...,p$. 
\end{proof}

\begin{remark}
\label{rmk:2}
The approximations (\ref{b6}) and (\ref{b7}) are,  from the coefficients $c_i$ computed in Table \ref{tab:1}, equivalent to the central-difference approximation of the first derivative and the centered Bessel's formulae (see \cite[p.142 \& p.183]{hildebrand1974introduction} or \cite{chung2010computational,dahlquist2008numerical}).
\end{remark}

 \begin{remark}
\label{rmk:3}Formula (\ref{b6}) gives the finite difference approximations in  \cite{khan2000new}, writing
\begin{equation}
\label{b6b} 
u'(t_{n})=
\frac{u(t_{n+1/2})-u(t_{n-1/2})}{k}-\sum_{i=1}^pc_{2i+1}k^{2i}D(D_+D_-)^iu(t_{n}) +O(k^{2p+2}),
\end{equation}where
$$\sum_{i=1}^pc_{2i+1}k^{2i}D(D_+D_-)^iu(t_{n})=k^{-1}\sum_{i=1}^p\left[ c_{2i+1}\sum_{j=0}^{2i+1}(-1)^j{{2i+1}\choose{j}}u(t_{n+i-j+1/2})\right] .$$

\noindent
-~~ For $p=1$ we have
$$
\begin{aligned}
u'(t_{n})&=
\frac{u(t_{n+1/2})-u(t_{n-1/2})}{k}-\frac{1}{24} k^{2}D(D_+D_-)u(t_{n}) +O(k^{4})\\&=\frac{u(t_{n+1/2})-u(t_{n-1/2})}{k}-\frac{u(t_{n+3/2})-3u(t_{n+1/2})+3u(t_{n-1/2})-u(t_{n-3/2})}{24k}\\&\quad\quad+O(k^4).
\end{aligned}
$$

\noindent
-~~ For $p=2$ we have
$$
\begin{aligned}
u'(t_{n})&=
\frac{u(t_{n+1/2})-u(t_{n-1/2})}{k}-\frac{1}{24} k^{2}D(D_+D_-)u(t_{n})+\frac{18}{2^5 5!} k^{4}D(D_+D_-)^2u(t_{n})\\& +O(k^{6}),
\end{aligned}
$$and then
$$
\begin{aligned}
u'(t_{n})=
\frac{u(t_{n+1/2})-u(t_{n-1/2})}{k}+\frac{1}{1920k} \begin{bmatrix}
9& -125 & 330& -330 &125 & -9
\end{bmatrix}U^T_{n,5} +O(k^{6}),
\end{aligned}
$$
where $U^T_{n,5}$ is the transpose of the vector
$$U_{n,5}=\begin{bmatrix}
u(t_{n+5/2})~~&u(t_{n+3/2})~~&u(t_{n+1/2})~~&u(t_{n-1/2})~~&u(t_{n-3/2})~~&u(t_{n-5/2})
\end{bmatrix}.$$
\end{remark}

The following theorem gives a new form of centered finite difference formulae which is useful for efficient starting procedures of high order time-stepping schemes via deferred correction strategy \cite{koyaguerebo2019arbitrary,koyaguerebo2020unconditionally}.

\begin{theorem}[Interior centered approximations]
\label{theo:7}
Let $u\in C^{2p+3}\left( [a,b],X\right)$, where $p$ is a positive integer and $[a,b]$, $a<b$, is a real interval. Given a uniform  partition $a=\tau_0<\tau_1<...<\tau_{2p+1}=b$ of $[a,b]$, that is $\tau_n=a+nk$ with $k=(b-a)/(2p+1)$, and $\tau_{p+1/2}=(a+b)/2$, there exist reals $c_2^p,c_3^p,\cdots,c_{2p+1}^p$ such that 
\begin{equation}
\label{01} 
u'(\tau_{p+1/2})=
\frac{u(b)-u(a)}{b-a}-\frac{1}{b-a}\sum_{i=1}^pc^p_{2i+1}k^{2i+1}D(D_+D_-)^iu(\tau_{p+1/2}) +O(k^{2p+2}).
\end{equation}
and
\begin{equation}
\label{02} 
u(\tau_{p+1/2}) =\frac{u(b)+u(a)}{2}-\sum_{i=1}^pc^p_{2i}k^{2i}(D_+D_-)^iE u(\tau_{p+1/2})+O(k^{2p+2}),
\end{equation} 
Table \ref{tab:3} gives the coefficients ${c}^p_i$ for $p=1,2,3,4$.

\begin{table}[!ht] 
\caption{Coefficients of the approximations (\ref{01})-(\ref{02}) for $p=1,2,3,4$}
\label{tab:3}
 \centering      
\begin{tabular}{lllllllllll}  
\hline\noalign{\smallskip} 
   $p$& ${c}^p_2$ &~${c}^p_3$ &~~~ ${c}^p_4$ &~~~~${c}^p_5$& ~${c}^p_6$&~~${c}^p_7$&~~~~${c}^p_8$&~~~~~${c}^p_9$&~~~\\[1.5ex]
  1&  ~$\frac{9}{8}$        &~$\frac{9}{8}$       \\[1ex]
  2&  $\frac{25}{8}$        &$\frac{125}{24}$         &$\frac{125}{128}$      &$\frac{125}{128}$     \\[1ex]
  3&  $\frac{49}{8}$        &$\frac{343}{24}$         &$\frac{637}{128}$      &$\frac{13377}{1920}$     &$\frac{1029}{1024}$      &$\frac{1029}{1024}$      \\[1ex]
  4&  $\frac{81}{8}$        &$\frac{243}{8}$         &$\frac{1917}{128}$      &$\frac{17253}{640}$     &$\frac{7173}{1024}$      &$\frac{64557}{7168}$      &$\frac{32733}{32768}$   &$\frac{32733}{32768}$  \\[1ex]
\noalign{\smallskip}\hline
\end{tabular} 
\end{table}  
\end{theorem}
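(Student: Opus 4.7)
The plan is to mimic the proof of Theorem~\ref{theo:1} verbatim, the only structural change being the initial Taylor coefficients, since the half-width $(b-a)/2 = (2p+1)k/2$ now replaces the $k/2$ of Theorem~\ref{theo:1}. Setting $\delta := (2p+1)k/2$ so that $a = \tau_{p+1/2} - \delta$ and $b = \tau_{p+1/2} + \delta$, I would first Taylor-expand $u(a)$ and $u(b)$ about $\tau_{p+1/2}$ and combine them to obtain
\[
u(b) - u(a) = (b-a)\,u'(\tau_{p+1/2}) + \sum_{i=1}^{p} \frac{d^p_{1,2i+1}}{(2i+1)!}\,k^{2i+1}\,u^{(2i+1)}(\tau_{p+1/2}) + O(k^{2p+3}),
\]
\[
u(b) + u(a) = 2\,u(\tau_{p+1/2}) + \sum_{i=1}^{p} \frac{d^p_{1,2i}}{(2i)!}\,k^{2i}\,u^{(2i)}(\tau_{p+1/2}) + O(k^{2p+2}),
\]
with $d^p_{1,2i+1} = (2p+1)^{2i+1}/2^{2i}$ and $d^p_{1,2i} = (2p+1)^{2i}/2^{2i-1}$ (only the odd, resp.\ even, powers of $\delta$ survive under the two symmetric combinations).

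Next I would run exactly the same cascade as in the proof of Theorem~\ref{theo:1}: for $q = 1, 2, \ldots, p$, successively substitute $u^{(2q+1)}(\tau_{p+1/2})$ and $u^{(2q)}(\tau_{p+1/2})$ by the finite-difference expressions supplied by formulae (\ref{a27}) and (\ref{a28}). Each substitution replaces a derivative by $D(D_+D_-)^q u(\tau_{p+1/2})$, resp.\ $(D_+D_-)^q E u(\tau_{p+1/2})$, and propagates a correction into the coefficient of every higher derivative through the recursion
\[
d^p_{q+1,2i+1} = d^p_{q,2i+1} - \frac{d^p_{q,2q+1}}{(2q+1)!}\sum_{j=0}^{2q+1}(-1)^j\binom{2q+1}{j}\bigl(q-j-\tfrac{1}{2}\bigr)^{2i+1}
\]
(and the analogous one in the even case) which is identical in form to the one in Theorem~\ref{theo:1}. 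The announced coefficients are then read off as $c^p_{2i+1} = d^p_{i,2i+1}/(2i+1)!$ and $c^p_{2i} = d^p_{i,2i}/(2\cdot(2i)!)$. Each stencil involved in $D(D_+D_-)^i$ and $(D_+D_-)^i E$ at $\tau_{p+1/2}$ spans $2(i+1)$ consecutive partition points centred on the midpoint; since $i \leq p$, every such stencil fits inside $\{\tau_0,\ldots,\tau_{2p+1}\}$, so (\ref{01})--(\ref{02}) are well-posed.

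For the remainder, after the $p$-th substitution the residual series contains only terms of the form $k^{2j}\,u^{(m)}(\tau_{p+1/2})$ with $m \geq 2p+3$ (odd case) or $m \geq 2p+2$ (even case) and $2j \geq 2p+2$, giving the $O(k^{2p+2})$ bound under the $C^{2p+3}$-regularity assumption. The main obstacle is purely bookkeeping: one must propagate the $(2p+1)$-factors correctly through the recursion to reproduce the tabulated values. A quick $p=1$ sanity check gives $d^1_{1,3} = 27/4$, $d^1_{1,2} = 9/2$, whence $c^1_3 = (27/4)/3! = 9/8$ and $c^1_2 = (9/2)/(2 \cdot 2!) = 9/8$, matching Table~\ref{tab:3}.
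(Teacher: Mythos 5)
Your proposal matches the paper's own proof essentially step for step: the same symmetric Taylor expansions about $\tau_{p+1/2}$ yielding $d^p_{1,i}=2^{1-i}(2p+1)^i$, the same cascade of substitutions via formulae (\ref{a27})--(\ref{a28}) with the same recursion for $d^p_{q,i}$, and the same final identification of the $c^p_i$. The $p=1$ sanity check and the stencil-fitting observation are correct, so nothing further is needed.
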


\begin{proof}By Taylor expansion we have
$$u(b)=u(a)-(b-a)u'(\tau_{p+1/2})+\sum_{i=1}^p \frac{(b-a)^{2i+1}}{2^{2i}(2i+1)!}u^{(2i+1)}(\tau_{p+1/2}), +O((b-a)^{2p+3}),$$
and
$$u(b)=-u(a)+2u(\tau_{p+1/2})+\sum_{i=1}^p \frac{(b-a)^{2i}}{2^{2i-1}(2i)!}u^{(2i)}(\tau_{p+1/2}) +O((b-a)^{2p+2}).$$
Substituting $b-a$ by $(2p+1)k$ in the summations, we deduce that
\begin{equation*}
u(b)=u(a)+(b-a)u'(\tau_{p+1/2})+\sum_{i=1}^p\frac{d^p_{1,2i+1}}{(2i+1)!}k ^{2i+1} u^{(2i+1)}(\tau_{p+1/2}) +O(k^{2p+3}),
\end{equation*}
and  
\begin{equation*}
u(b)=-u(a)+2u(\tau_{p+1/2})+\sum_{i=1}^p\frac{d^p_{1,2i}}{(2i)!}k ^{2i} u^{(2i)}(t_{p+1/2}) +O(k^{2p+2}),
\end{equation*}
where
$$d^p_{1,i}=2^{1-i}(2p+1)^i, ~\mbox{ for } i=1,\cdots,2p+1.$$ Proceeding exactly as in Theorem \ref{theo:1}, we obtain the real $d^p_{q,i}$ such that, for $q=1,...,p-1$ and $i=q+1,q+2,...,p$, we have 
\begin{equation*}
d^p_{q+1,2i+1}=d^p_{q,2i+1}-\frac{d^p_{q,2q+1}}{(2q+1)!}\sum_{j=0}^{2q+1}(-1)^j\binom{2q+1}{j}(q-j-1/2)^{2i+1},
\end{equation*}and
\begin{equation*}
d^p_{q+1,2i}=d^p_{q,2i}-\frac{d^p_{q,2q}}{(2q)!\times 2}\sum_{j=0}^{2q}(-1)^j\binom{2q}{j}\left[ (q-j-1/2)^{2i}+(q-j+1/2)^{2i}\right] .
\end{equation*}
Finally, $c^p_{2i}=d^p_{i,2i}/((2i)!\times 2)$ and $c^p_{2i+1}=d^p_{i,2i+1}/(2i+1)!$, for $i=1,2,...,p$. 
\end{proof}

The following finite difference formulae are useful for the construction of new time-stepping methods by applying the deferred correction method to backward or forward schemes.

\begin{theorem}(Forward-centered and backward-centered approximations)
\label{theo:ac1}
 There exists a sequence $\displaystyle \left\lbrace a_i \right\rbrace_{i\geq 2}$ and $\displaystyle \left\lbrace b_i \right\rbrace_{i\geq 2}$ of real numbers such that, for any function $u\in C^{p+1}\left( [0,T],X\right)$ and a partition $0=t_0<t_1<...<t_N=T$, $t_n=nk$, of $[0,T]$, we have
 \begin{equation}
\label{ab24}
u'(t_n)=\frac{u(t_{n+1})-u(t_n)}{k} -\sum_{i=2}^{p}a_{i}k^{i-1}D_-^{\tau (i)}(D_+D_-)^{\mu (i)}u(t_n)+O(k^{p}),
\end{equation}and
 \begin{equation}
\label{ab25}
u'(t_{n+1})=\frac{u(t_{n+1})-u(t_n)}{k} +\sum_{i=2}^{p}b_{i}k^{i-1}D_-^{\tau (i)}(D_+D_-)^{\mu (i)}u(t_{n+1})+O(k^{p}),
\end{equation}
for $\mu (p)+\tau(p)\leq n\leq N-\mu (p)$, where $\mu(i)$ and $\tau (i)$ are, respectively, the quotient and the remainder of the Euclidean division of the integer $i$ by 2, that is $i=2\mu (i)+\tau(i)$, $\tau(i) =0 \mbox{ or } 1$. The errors constants for the finite differences approximations (\ref{ab24})-(\ref{ab25}) are $a_{p+1}$ and $b_{p+1}$, respectively, and we have the relation  $a_2=b_2$, and $a_i=-b_i$, for $i=3,4,\cdots$.

Table \ref{tab:table1} gives the coefficients $a_i$, for $i=2,3,\cdots,11$. 
\begin{table}[h!] 
\caption{Table of coefficients, for differed correction backward Euler method.}
\label{tab:table1}
\begin{tabular}{llllllllllll}  
\hline\noalign{\smallskip} 
      & $a_2$ & $a_3$ & $a_4$ & \quad $a_5$ & \quad $a_6$ &  $a_7$ & \; $a_8$& \quad $a_9$& \quad $a_{10}$& \; $a_{11}$\\[2ex]
    &  $\frac{1}{2}$ & $\frac{1}{3!}$ &  $\frac{2}{4!}$ &  $-\frac{4}{5!}$  &  $-\frac{12}{6!}$ &$\frac{36}{7!}$ & $\frac{144}{8!}$ & $-\frac{576}{9!}$&$-\frac{2880}{10!}$&$\frac{14400}{11!}$\\[1ex]
\noalign{\smallskip}\hline
\end{tabular} 
\end{table}
\end{theorem}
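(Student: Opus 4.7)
The plan is to adapt the iterative substitution strategy of Theorem~\ref{theo:1} to the mixed even/odd setting of this statement. For the forward-centered identity~(\ref{ab24}) I would start from the Taylor expansion
\[ u(t_{n+1}) = u(t_n) + k\,u'(t_n) + \sum_{i=2}^{p}\frac{k^i}{i!}\,u^{(i)}(t_n) + O(k^{p+1}), \]
which, once rearranged, supplies the initial coefficients $d^a_{1,i} = 1/i!$ in front of $k^{i-1}u^{(i)}(t_n)$ for every $i\ge 2$. Unlike Theorem~\ref{theo:1}, both even and odd derivatives now appear on the right-hand side. At each step $u^{(i)}(t_n)$ is to be replaced using Theorem~\ref{theo:a9}: formula~(\ref{a26}) when $i=2\mu(i)$ is even (so $\tau(i)=0$) and formula~(\ref{a25}) when $i=2\mu(i)+1$ is odd (so $\tau(i)=1$). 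This is exactly what the unified operator $D_-^{\tau(i)}(D_+D_-)^{\mu(i)}$ encodes.

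Proceeding by induction on $q = 1,\ldots,p-1$, after stage $q$ the expression reads
\[ u'(t_n) = \frac{u(t_{n+1})-u(t_n)}{k} - \sum_{i=2}^{q} a_i\, k^{i-1}\, D_-^{\tau(i)}(D_+D_-)^{\mu(i)}u(t_n) - \sum_{i=q+1}^{p} d^a_{q,i}\, k^{i-1}\,u^{(i)}(t_n) + O(k^p), \]
and the transition from $q$ to $q+1$ consists in substituting $u^{(q+1)}(t_n)$ via Theorem~\ref{theo:a9}. This identifies $a_{q+1} = d^a_{q,q+1}$ and yields an explicit recurrence of the form $d^a_{q+1,i} = d^a_{q,i} - d^a_{q,q+1}\,S_{q+1,i}$, with $S_{q+1,i}$ the binomial sum appearing in~(\ref{a25}) or~(\ref{a26}). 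A direct check on the first few stages recovers $a_2 = 1/2$, $a_3 = 1/3!$ and $a_4 = 2/4!$ from Table~\ref{tab:table1}, and the uniform $O(k^p)$ control of the remainder follows from the regularity assumption $u\in C^{p+1}$ together with Theorem~\ref{cor:a10}.

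The backward-centered identity~(\ref{ab25}) is established analogously, but starting from the Taylor expansion of $u(t_n)$ around $t_{n+1}$; this produces seed coefficients $d^b_{1,i} = (-1)^i/i!$ combined with the opposite overall sign in front of the correction series. The final claim $a_2 = b_2$ and $a_i = -b_i$ for $i \ge 3$ is where I expect the main obstacle to lie, since the $d^a_{q,i}$ and $d^b_{q,i}$ recurrences share the same binomial sums $S_{q+1,i}$ but are seeded with different signs. The clean way to handle it is to prove by induction on $q$ a sign invariant of the form $d^b_{q,i} = (-1)^{\varepsilon(i,q)}d^a_{q,i}$, where $\varepsilon(i,q)$ depends only on the parities of $i$ and $q$; once this invariant is verified at $q=1$ and shown to be preserved by the common recurrence, the stated symmetry between $a_i$ and $b_i$, as well as the identification of the error constants $a_{p+1}$ and $b_{p+1}$, follow immediately.
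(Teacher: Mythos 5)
Your main argument for \eqref{ab24} and \eqref{ab25} is exactly the paper's: seed with the Taylor expansion of $u(t_{n+1})$ about $t_n$ (resp.\ of $u(t_n)$ about $t_{n+1}$), then iteratively replace $u^{(q)}$ by $D_-^{\tau(q)}(D_+D_-)^{\mu(q)}u$ via \eqref{a25}--\eqref{a26}, reading off $a_{q+1}$ as the current coefficient of $u^{(q+1)}$ and updating the remaining coefficients through the recurrence with the binomial sums $S_{q,i}=\sum_{j=0}^{q}(-1)^j\binom{q}{j}(\mu(q)-j)^i$. That part is sound and coincides with the paper's proof (the paper treats the $b_i$ only with ``can be obtained similarly'').

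The one place your plan would fail is the sign invariant you propose for proving $a_2=b_2$ and $a_i=-b_i$ ($i\ge 3$). An invariant of the form $d^b_{q,i}=(-1)^{\varepsilon(i,q)}d^a_{q,i}$ with $\varepsilon$ genuinely depending on the parity of $i$, seeded at $q=1$ by $d^b_{1,i}=(-1)^i d^a_{1,i}$, is \emph{not} preserved by the common recurrence $d_{q+1,i}=d_{q,i}-d_{q,q+1}S_{q+1,i}/i!$: preservation would require $S_{q+1,i}=0$ whenever $i$ and $q+1$ have opposite parities, and while this holds for even $q+1$ (by \eqref{a13}, since the stencil of $(D_+D_-)^{\mu}$ is symmetric about $t_n$), it fails for odd $q+1$ because the stencil of $D_-(D_+D_-)^{\mu}$ is centered at $t_{n-1/2}$; e.g.\ $S_{3,4}=\sum_{j=0}^{3}(-1)^j\binom{3}{j}(1-j)^4=-12\neq 0$. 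Indeed the parity invariant would predict $b_4=a_4$, contradicting the theorem. The correct invariant is the \emph{uniform} one $d^b_{q,i}=-d^a_{q,i}$ for all $i\ge 3$, which is false at $q=1$ (where $d^b_{1,i}=d^a_{1,i}$ for even $i$) but becomes true at $q=2$ and then propagates trivially through the shared recurrence. The base case $q=2$ rests on the special value $S_{2,i}=1+(-1)^i$: for odd $i\ge 3$ one gets $d^a_{2,i}=1/i!$ and $d^b_{2,i}=-1/i!$, while for even $i\ge 4$ both $d^a_{2,i}$ and $d^b_{2,i}$ vanish, so the relation holds (the even-index coefficients are regenerated with opposite signs only at later stages). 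With this corrected base case your induction goes through and yields the stated relations together with the error constants $a_{p+1}$, $b_{p+1}$.
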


\begin{proof} 
Taylor expansion of the function $u$ at order $p$ around $t=t_n$ gives
\begin{equation}
\label{ab26}u(t_{n+1})=u(t_n)+A_{1,1}ku'(t_n)+\sum_{i=2}^{p}A_{1,i}\frac{k^i}{i!}u^{(i)}(t_n)+O(k^{p+1}),
\end{equation}
where $A_{1,i}=1$, for $i=1, 2, 3,..., p$. Suppose that
\begin{equation}
\label{cyr26b}
\begin{aligned}
u&(t_{n+1})=u(t_n)+A_{1,1}ku'(t_n)+A_{2,2}k^2D_+D_-u(t_n)+A_{3,3}k^3D_-(D_+D_-)u(t_n)+...\\&+A_{q-1,q-1}k^{q-1}D_-^{\tau(q-1)}(D_+D_-)^{\mu(q-1)}u(t_n)+\sum_{i=q}^{p}A_{q-1,i}k^iu^{(i)}(t_n)+O(k^{p+1}),
\end{aligned}
\end{equation}
for an arbitrary integer $q\geq 2$, where (\ref{ab26}) is the formula for $q=2$. From (\ref{a25})-(\ref{a26}) and (\ref{a13}) we have
$$u^{(q)}(t_n)=D_-^{\tau(q)}(D_+D_-)^{\mu(q)}u(t_n)-\sum_{i=q+1}^\infty \frac{k^{i-q}}{i!} u^{(i)}(t_n)\sum_{j=0}^q(-1)^j{{q}\choose{j}}\left(\mu(q)-j \right)^{i},$$and it follows that
$$
\begin{aligned}
&\sum_{i=q}^{p}A_{q-1,i}k^iu^{(i)}(t_n)=A_{q-1,q}\frac{k^q}{q!}u^{(q)}(t_n)+\sum_{i=q+1}^{p}A_{q-1,i}\frac{k^i}{i!} u^{(i)}(t_n)\\&
=A_{q-1,q}\frac{k^q}{q!}D_-^{\tau(q)}(D_+D_-)^{\mu(q)}u(t_n)\\&+\sum_{i=q+1}^{p}\left(  A_{q-1,i}-\frac{A_{q-1,q}}{q!}\sum_{j=0}^q(-1)^j{{q}\choose{j}}\left(\mu(q)-j \right)^{i}\right) \frac{k^i}{i!} u^{(i)}(t_n)+O(k^{p+1}).
\end{aligned}
$$
Substituting the last identity in (\ref{cyr26b}), we deduce that 
\begin{equation*}
\begin{aligned}
u&(t_{n+1})=u(t_n)+ku'(t_n)+A_{2,2}k^2D_+D_-u(t_n)+A_{3,3}k^3D_-(D_+D_-)u(t_n)+...\\&+A_{q,q}k^{q}D_-^{\tau(q)}(D_+D_-)^{\mu(q)}u(t_n)+\sum_{i=q+1}^{p}A_{q,i}k^iu^{(i)}(t_n)+O(k^{p+1}),
\end{aligned}
\end{equation*}
where, for $q=2,3,\cdots, p$ we have
$$A_{q,q}=A_{q-1,q}$$
and
$$A_{q,i}=A_{q-1,i}-\displaystyle\frac{A_{q,q}}{q!}\sum_{j=0}^{q}(-1)^j\binom{q}{j}\left( \mu(q)-j\right) ^i,\mbox{for } i=q+1,q+2,...,p.$$ 
We can then deduce by induction on $q$ that formula (\ref{ab24}) holds with $a_i=A_{i,i}$, for $i=2,...,p$. The sequence $\displaystyle \left\lbrace b_i \right\rbrace_{i\geq 2}$ can be  obtained similarly.

\end{proof}

\begin{remark}
\label{rmk4}
The standard forward formula writes
 \begin{equation}
\label{ab27}
u'(t_n)=\frac{u(t_{n+1})-u(t_n)}{k} -\sum_{i=2}^{p}\frac{(-1)^i}{i}k^{i-1} D_+^iu(t_n)+O(k^{p}).
\end{equation}It can be obtained by substituting successively the derivative $u^{(2)}(t_n)$,  $u^{(3)}(t_n)$, ..., in (\ref{ab26}) by the  expansion (\ref{ab28}), and the standard backward formula writes
 \begin{equation}
\label{ab27b}
u'(t_{n+1})=\frac{u(t_{n+1})-u(t_n)}{k} +\sum_{i=2}^{p}\frac{1}{i}k^{i-1} D_-^iu(t_{n+1})+O(k^{p}),
\end{equation} and can be obtained from (\ref{ab29}). 
The errors constants in the new forward-centered and backward-centered formulae are smaller than for the standard forward and backward formulae (\ref{ab27}) and (\ref{ab27b}), respectively. For example, the error constant for an approximation of order 10 for $u'(t_n)$ by the formulae (\ref{ab27})-(\ref{ab27b}) is $1/11$ while the corresponding error constant for (\ref{ab24})-(\ref{ab25}) is $14400/11!$.
\end{remark}

More generally, we have the following result:
\begin{theorem}[General finite difference formulae]
\label{theo:a11} For an analytic function $u:\mathbb{R} \longrightarrow X$, given an integer $m$ and a real $k>0$, we can write, for any integer $p\geq m$ and a real $t$,
\begin{equation}
\label{ab30}
u^{(m)}(t)=k^{-m}\sum_{i=m}^p\sum_{\vert \alpha^i\vert=i}C_{\alpha^i}(k_i)^iD^{\alpha^i}u(t)+O(k^{p+1-m}),
\end{equation}
where $C_{\alpha^i}$ are constants,  $k_m=k$, $k_i=\varepsilon_i k$ (for $i\geq m+1$, where $\varepsilon_i>0$ is arbitrarily chosen), and each finite difference operator $D^{\alpha^i}$ is related to $k_i$ in the sense that
\begin{equation}
\label{ab30b}(k_i)^iD^{\alpha^i}u(t)=\sum_{j=0}^i(-1)^j{{i}\choose{j}}u\left( t+(\alpha^i_1-j)k_i\right), \mbox{ for } |\alpha^i|=i.
\end{equation}
\end{theorem}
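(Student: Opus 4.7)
The plan is to prove (\ref{ab30}) by induction on $p \geq m$, reusing the deferred-correction-style substitution already exploited in the proofs of Theorems \ref{theo:1}, \ref{theo:7}, and \ref{theo:ac1}. The guiding idea is that at every stage the leading error term is a scalar multiple of some $u^{(q)}(t)$, which can be replaced by any finite difference operator from Theorem \ref{theo:a9}, thereby introducing a new term in the sum and pushing the error one order higher. The flexibility in the choice of the spacings $k_i = \varepsilon_i k$ and of the multi-indices $\alpha^i$ is precisely what powers the generality claimed in the statement.

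For the base case $p = m$, I would invoke any of the first-order formulas of Theorem \ref{theo:a9} with spacing $k_m = k$, for instance (\ref{ab28}), which gives
$$u^{(m)}(t) = D_+^m u(t) + O(k) = k^{-m}\bigl(k^m D_+^m u(t)\bigr) + O(k),$$
i.e.\ (\ref{ab30}) at stage $p = m$ with the single nonzero coefficient $C_{(m,0)} = 1$ and remainder $O(k^{p+1-m}) = O(k)$.

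For the inductive step, I assume (\ref{ab30}) at stage $p$. Using the explicit Taylor expansions of each $D^{\alpha^i} u(t)$ provided by Theorem \ref{theo:a9}, the remainder admits the form
$$u^{(m)}(t) - k^{-m}\sum_{i=m}^{p}\sum_{|\alpha^i|=i} C_{\alpha^i}(k_i)^i D^{\alpha^i} u(t) = \sum_{j\geq p+1} \gamma_j\, k^{j-m} u^{(j)}(t),$$
where the coefficients $\gamma_j$ are computable from the $C_{\alpha^i}$'s and the $\varepsilon_i$'s retained so far. I then pick an arbitrary $\varepsilon_{p+1} > 0$, set $k_{p+1} = \varepsilon_{p+1} k$, and select any $\alpha^{p+1}$ with $|\alpha^{p+1}| = p+1$. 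Applying Theorem \ref{theo:a9} with spacing $k_{p+1}$ gives
$$u^{(p+1)}(t) = (k_{p+1})^{-(p+1)}(k_{p+1})^{p+1} D^{\alpha^{p+1}} u(t) + \sum_{j\geq p+2} \delta_j\, k^{j-p-1} u^{(j)}(t).$$
Substituting this into the $\gamma_{p+1}$ term of the remainder absorbs it into a new coefficient $C_{\alpha^{p+1}}$, so that the double sum now extends to $i = p+1$, while the new residual is a Taylor-type series starting at $u^{(p+2)}(t)$ and is therefore $O(k^{p+2-m})$, closing the induction.

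The main obstacle is the bookkeeping that controls the \emph{form} of the remainder after each substitution: one must ensure that the leftover error remains expressible as a (convergent) series in the higher derivatives $u^{(j)}(t)$ so that the process can iterate. This is exactly what Theorem \ref{theo:a9} supplies through its explicit Taylor error terms, while analyticity of $u$ legitimises manipulating these series termwise. For a given finite $p$, only finitely many substitutions are performed, so the argument yields the stated $O(k^{p+1-m})$ bound with no convergence subtlety.
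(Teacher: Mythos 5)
Your proposal is correct and follows essentially the same route as the paper: both start from one chosen $D^{\alpha^m}$ with spacing $k$, then repeatedly replace the leading derivative $u^{(i)}(t)$ in the Taylor-expanded remainder by a finite difference $D^{\alpha^i}$ with freely chosen spacing $k_i=\varepsilon_i k$, pushing the error up one order per step; you merely recast the paper's ``repeat the procedure'' as a formal induction on $p$ with the (correctly strengthened) hypothesis that the remainder is itself a Taylor-type series in the higher derivatives.
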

\begin{proof}For a double index $\alpha^i=(\alpha^i_1,\alpha^i_2)$ such that $|\alpha^i|=i$ and a spacing  $k_i>0$, since $D^{\alpha^i}$ is related to $k_i>0$, we deduce from (\ref{ab30b}) and Theorem \ref{theo:a9} that 
\begin{equation}
\label{ab31}u^{(i)}(t)=D^{\alpha^i}u(t)-\sum_{l=i+1}^{\infty}\frac{(k_i)^{l-i}}{l!}u^{(l)}(t)\sum_{j=0}^i(-1)^j{{i}\choose{j}}\left(\alpha^i_1-j  \right)^l. 
\end{equation}
Therefore, we can choose one double index $\alpha^m$ such that $|\alpha^m|=m$ and deduce that
$$k^m u^{(m)}(t)=k^mD^{\alpha^m}u(t)-\sum_{l=m+1}^{\infty}\frac{k^{l}}{l!}u^{(l)}(t)\sum_{j=0}^m(-1)^j{{m}\choose{j}}\left(\alpha^m_1-j  \right)^l.$$
This identity can be written
\begin{equation}
\label{ab32}k^m u^{(m)}(t)=k^mD^{\alpha^m}u(t)+\sum_{l=m+1}^{\infty} C_{m+1,l}\frac{(k_{m+1})^{l}}{l!}u^{(l)}(t), 
\end{equation}
where $k_{m+1}=\varepsilon_{m+1}k$, for a real $\varepsilon_{m+1}>0$ arbitrarily chosen, and
$$C_{m+1,l}=-(\varepsilon_{m+1})^{-l}\sum_{j=0}^m(-1)^j{{m}\choose{j}}\left(\alpha^m_1-j  \right)^l, \mbox{ for } l\geq m+1.$$
Next, we choose one double index $\alpha^{m+1}$ such that $|\alpha^{m+1}|=m+1$ and substitute the identity (\ref{ab31}) for $i=m+1$ into (\ref{ab32}) to obtain
\begin{equation}
\label{ab33}
\begin{aligned}
k^m u^{(m)}(t)=k^mD^{\alpha^m}u(t)&+C_{m+1,m+1}(k_{m+1})^{m+1}D^{\alpha^{m+1}}u(t)\\&+\sum_{l=m+2}^{\infty} C_{m+2,l}\frac{(k_{m+2})^{l}}{l!}u^{(l)}(t),
\end{aligned} 
\end{equation}
where $k_{m+2}=\varepsilon_{m+2}k_{m+1}$, for a real $\varepsilon_{m+2}>0$ arbitrarily chosen, and, for $l\geq m+2$,
$$C_{m+2,l}=(\varepsilon_{m+2})^{-l}\left(C_{m+1,l}-\frac{C_{m+1,m+1}}{(m+1)!} \sum_{j=0}^{m+1}(-1)^j{{m+1}\choose{j}}\left(\alpha^{m+1}_1-j  \right)^l\right). $$
This procedure is repeated until obtaining the expected order of accuracy.
\end{proof}

\begin{remark} As a simple application of Theorem \ref{theo:a11}, the standard central difference for the second derivative (see, e.g., \cite[Formulae (3.3.10)-(3.3.11)]{chung2010computational}) can be obtained as follows:
We choose $m=1$ in formula (\ref{a26}) and obtain
\begin{equation}
\label{ab34}
k^2u"(t_n)=k^2(D_+D_-)u(t_n)-2\sum_{i=2}^{\infty}\frac{k^{2i}}{(2i)!}u^{(2i)}(t_{n}),
\end{equation}
which is the second order approximation of $u"(t_n)$ with error constant $K_2=-1/12$. The same formula for $m=2$ gives
\begin{equation*}
k^4u^{(4)}(t_n)=k^4(D_+D_-)^2u(t_n)-\sum_{i=3}^{\infty} \frac{k^{2i}}{(2i)!}u^{(2i)}(t_{n})\sum_{j=0}^{4}(-1)^j\binom{4}{j}(2-j)^{2i}.
\end{equation*}
Substituting the last identity in (\ref{ab34}), we deduce that 
\begin{equation*}
\begin{aligned}
k^2u"(t_n)=&k^2(D_+D_-)u(t_n)-\frac{2k^4}{4!}(D_+D_-)^2u(t_n)\\&+\sum_{i=3}^{\infty}\left(-2+\frac{2}{4!}\sum_{j=0}^{4}(-1)^j\binom{4}{j}(2-j)^{2i} \right) \frac{k^{2i}}{(2i)!}u^{(2i)}(t_{n}).
\end{aligned}
\end{equation*}
The last formula gives the approximation of order 4 for $u"(t_n)$ with error constant 
$$K_4=\left(-2+\frac{2}{4!}\sum_{j=0}^{4}(-1)^j\binom{4}{j}(2-j)^{6} \right) \frac{1}{6!}=\frac{1}{90}.$$
The arbitrary high order central difference can be obtained by continuing the procedure.
\end{remark}

\section{Numerical test}
This section deals with a comparison between the standard finite difference formulae and the new formulae  obtained in Theorem \ref{theo:7} and \ref{theo:ac1}. The comparisons address the numerical differentiation of the functions  $u(x)=\sin(100\pi x)$ and $u(x)=\sin(1000\pi x)$ which are taken from the list of tests functions in \cite{khan2000new}. For the classical finite difference formulae we just select the backward formulae of order 6 and 10, denoted $B6$ and $B10$, respectively. For the new finite difference formulae we choose the backward-centered formulae of order 6 and 10, denoted $BC6$ and $BC10$, respectively, and the interior-centered formulae of order 6 and 10, denoted $IC6$ and $IC10$, respectively. We drop the standard forward finite difference formula since it reaches the same accuracy as the backward formula (for a same order of approximation). The standard centered finite difference formula has the accuracy of the interior-centered formula so that we choose to not show it. Finally, the forward-centered formula reaches the same accuracy as the backward-centered formula.

Figure \ref{fig:2} shows that each of the finite difference formulae choosen gives a good approximate derivative of the functions considered. The accuracy of the approximations are related to both the order of accuracy of the corresponding formula and its error constant. Moreover, the new formulae are less prone to floating point error when the approximation reaches machine accuracy.

\begin{figure}[!ht]
\centering
{
    \includegraphics[width=.49\textwidth]{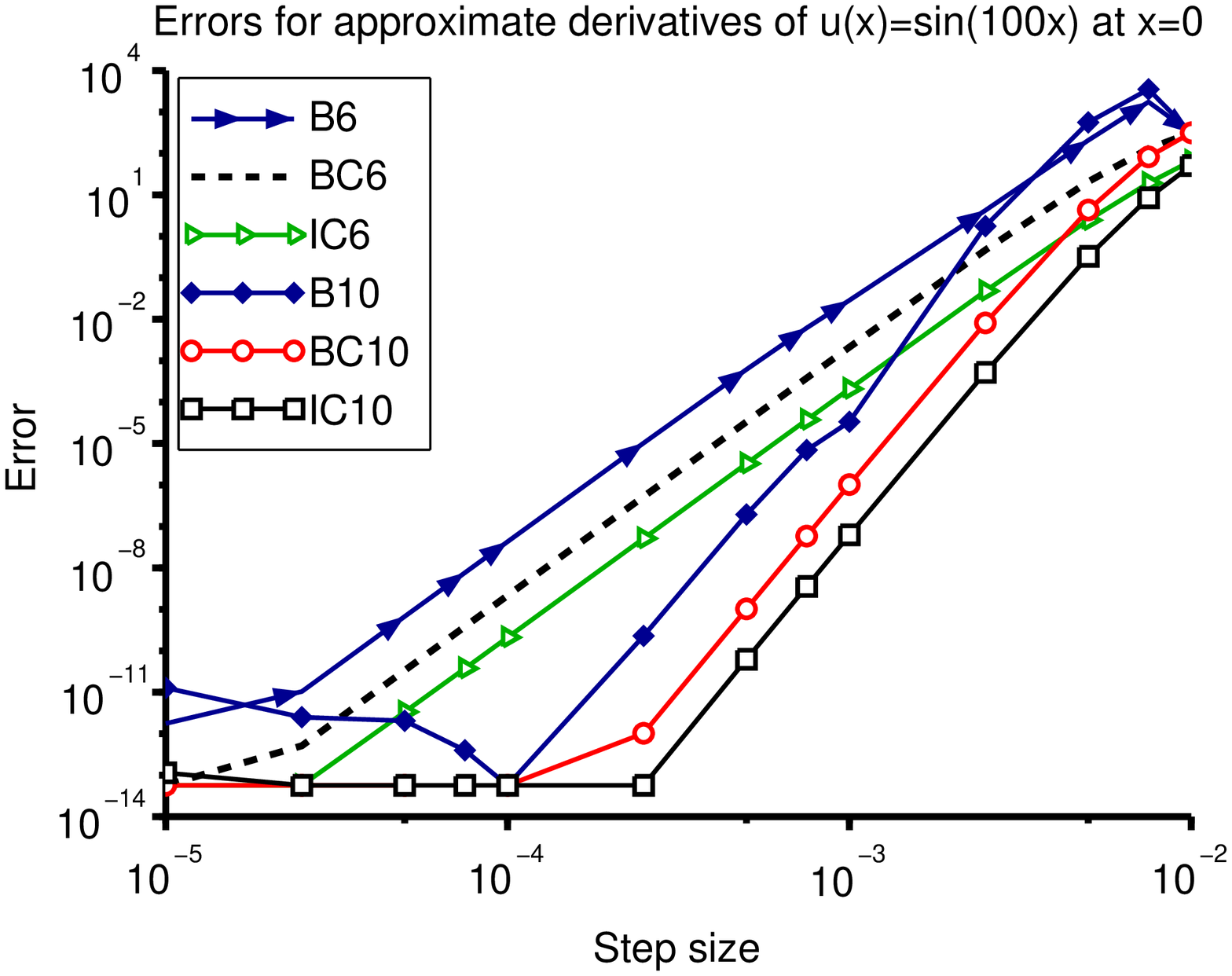}
  \includegraphics[width=.49\textwidth]{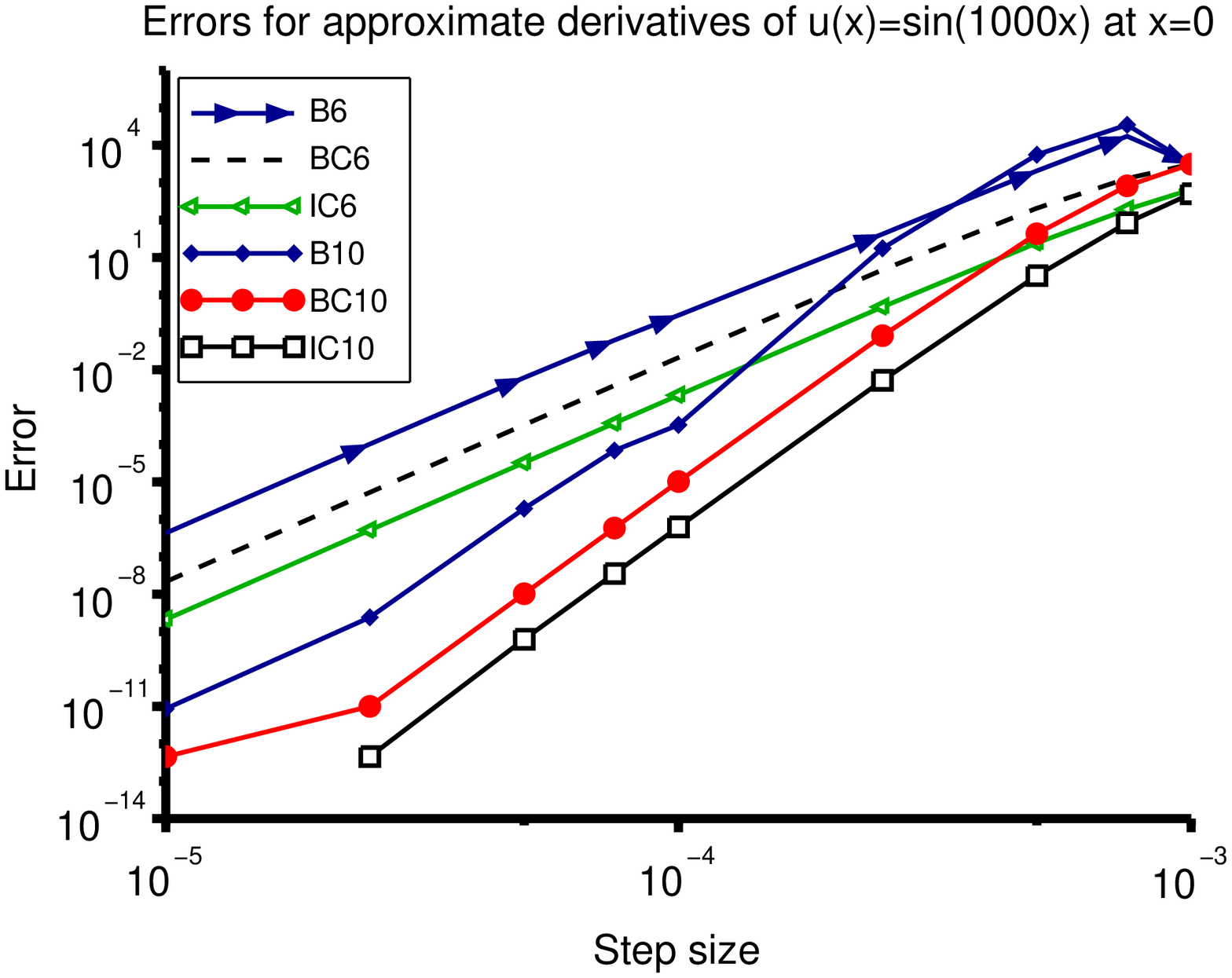}
   \label{fig:subfig2}
}
\caption[]{Graphs of absolute error for the numerical derivative of $u(x)=\sin(100\pi x)$ (left) and $u(x)=\sin(1000\pi x)$ (right) at $x=0$ with $B6$, $B10$, $BC6$, $BC10$, $IC6$ and $IC10$.}
\label{fig:2}
\end{figure}


%
%

\bibliographystyle{elsarticle-num}       
\bibliography{references}   

\begin{thebibliography}{10}
\expandafter\ifx\csname url\endcsname\relax
  \def\url#1{\texttt{#1}}\fi
\expandafter\ifx\csname urlprefix\endcsname\relax\def\urlprefix{URL }\fi
\expandafter\ifx\csname href\endcsname\relax
  \def\href#1#2{#2} \def\path#1{#1}\fi

\bibitem{khan1999closed}
I.~R. Khan, R.~Ohba, Closed-form expressions for the finite difference
  approximations of first and higher derivatives based on {T}aylor series, J.
  Comput. Appl. Math. 107 (1999) 179--193.

\bibitem{khan2000new}
I.~R. Khan, R.~Ohba, New finite difference formulas for numerical
  differentiation, J. Comput. Appl. Math. 126 (2000) 269--276.

\bibitem{khan2003taylor}
I.~R. Khan, R.~Ohba, Taylor series based finite difference approximations of
  higher-degree derivatives, J. Comput. Appl. Math. 154 (2003) 115--124.

\bibitem{quarteroni2010}
A.~Quarteroni, R.~Sacco, F.~Saleri, Numerical mathematics, 2nd Edition,
  Vol.~37, Springer-Verlag, Berlin, 2007.

\bibitem{hildebrand1974introduction}
F.~B. Hildebrand, Introduction to {N}umerical {A}nalysis, McGraw-Hill Book Co.,
  New York-D\"{u}sseldorf-Johannesburg, 1974.

\bibitem{chung2010computational}
T.~Chung, Computational {F}luid {D}ynamics, 2nd Edition, Cambridge university
  press, 2010.

\bibitem{dahlquist2008numerical}
G.~Dahlquist, A.~k. Bj\"{o}rck, Numerical methods in scientific computing.
  {V}ol. {I}, SIAM, Philadelphia, PA, 2008.

\bibitem{daniel1967interated}
J.~W. Daniel, V.~Pereyra, L.~L. Schumaker, Iterated deferred corrections for
  initial value problems, Acta Cient. Venezolana 19 (1968) 128--135.

\bibitem{MR2058857}
B.~Gustafsson, W.~Kress, Deferred correction methods for initial value
  problems, BIT 41 (2001) 986--995.

\bibitem{kress2002deferred}
W.~Kress, B.~Gustafsson, Deferred correction methods for initial boundary value
  problems, J. Sci Comput. 17~(1-4) (2002) 241--251.

\bibitem{koyaguerebo2019arbitrary}
S.-C.~R. Koyaguerebo-Im{\'e}, Y.~Bourgault, Arbitrary order {A}-stable methods
  for ordinary differential equations via deferred correction, Submitted to
  BIT. (2020).

\bibitem{koyaguerebo2020unconditionally}
S.-C.~R. Koyaguerebo-Im{\'e}, Y.~Bourgault, Arbitrary high-order
  unconditionally stable methods for reaction-diffusion equations via deferred
  correction: Case of the implicit midpoint rule, Submitted to IMA J. Numer.
  Anal. (2020).

\end{thebibliography}


\end{document}